\documentclass[12pt]{article}
\usepackage[utf8]{inputenc}
\usepackage[margin=1in]{geometry}
\usepackage{amsmath}
\usepackage{mathrsfs}
\usepackage{amssymb}
\usepackage{amsfonts}
\usepackage{mathtools}
\usepackage{amsthm}
\usepackage{tikz-cd}
\usepackage{thmtools}
\usepackage{accents}
\usepackage{thm-restate}
\usepackage{hyperref}
\usepackage{CommonCommand}
\usepackage{subfiles} 

\hypersetup{colorlinks=true,linkcolor=blue, linktocpage}

\theoremstyle{plain}
\newtheorem{thm}{Theorem}[section]
\newtheorem{lemma}[thm]{Lemma}

    \newtheoremstyle{TheoremNum}
        {\topsep}{\topsep}              
        {\itshape}                      
        {}                              
        {\bfseries}                     
        {.}                             
        { }                             
        {\thmname{#1}\thmnote{ \bfseries #3}}
    \theoremstyle{TheoremNum}
    \newtheorem{thmn}{Theorem}

\theoremstyle{definition}
\newtheorem{defn}[thm]{Definition}
\newtheorem{exmp}[thm]{Example}
\newtheorem{rmk}[thm]{Remark}

\newcommand{\C}{\bb{C}}
\newcommand{\A}{\bb{A}}

\renewcommand{\O}{\mathcal{O}}
\newcommand{\mf}[1]{\mathfrak{#1}}
\newcommand{\m}{\mf{m}}
\DeclareMathOperator{\Spec}{Spec}
\DeclareMathOperator{\len}{len}

\title{Milnor and Tjurina numbers for an isolated complete intersection singularity}
\author{A.J. Parameswaran , Mohit Upmanyu}

\begin{document}

\maketitle

\begin{abstract}
This paper aims to prove that given a isolated complete intersection singularity, the Milnor number will be bounded by a bound depending only on Tjurina number and dimension of the singularity. The proof uses A$\m$AC (introduced in \cite{2022ParamMohit}) and as with such methods, the bound is purely existential.
\end{abstract}


\section{Introduction}

Throughout this paper we fix $A = \bb{C}[\![x_1,x_2,...,x_{n+k}]\!]$ and $\m$ denote its maximal ideal. 

\begin{thm}[\cite{liu2018milnor}]
Let $X \subset \Spec(A)$ be an isolated hypersurface singularity of dimension $n$ (i.e k = 1) with the Tjurina number $\tau$. Then $\mu(X) \leq (n+1)\tau$.
\end{thm}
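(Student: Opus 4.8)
The plan is to analyze multiplication by $f$ on the Milnor algebra $M := A/J_f$, where $J_f = (\partial f/\partial x_1,\dots,\partial f/\partial x_{n+1})$ is the Jacobian ideal (recall $k=1$, so $A$ has $n+1$ variables). By definition $\mu = \dim_{\mathbb{C}} M$, while $\tau = \dim_{\mathbb{C}} A/(f,J_f) = \dim_{\mathbb{C}} M/fM$. Since $X$ has an isolated singularity, $M$ is a finite-dimensional local $\mathbb{C}$-algebra, and $f$ lies in its maximal ideal, so multiplication by $f$ is nilpotent on $M$. I would therefore work with the $f$-adic filtration
\[
M = f^0 M \supseteq fM \supseteq f^2M \supseteq \cdots \supseteq f^N M \supseteq f^{N+1}M = 0
\]
and bound $\mu$ by summing the dimensions of its successive quotients.

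The first step is to bound each graded piece. For every $i\ge 0$, multiplication by $f^i$ sends $M$ onto $f^iM$ and carries $fM$ into $f^{i+1}M$, hence induces a surjection $M/fM \twoheadrightarrow f^iM/f^{i+1}M$; therefore $\dim_{\mathbb{C}} f^iM/f^{i+1}M \le \dim_{\mathbb{C}} M/fM = \tau$. Summing over the filtration gives
\[
\mu = \dim_{\mathbb{C}} M = \sum_{i\ge 0}\dim_{\mathbb{C}}\bigl(f^iM/f^{i+1}M\bigr)\le (N+1)\,\tau,
\]
where $N$ is the least integer with $f^{N+1}M=0$. Thus the entire theorem reduces to proving that the filtration terminates by step $n+1$, i.e. that $f^{n+1}M=0$, which is exactly the inclusion $f^{n+1}\in J_f$.

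The crux, and the main obstacle, is this inclusion $f^{n+1}\in J_f$, which I would establish through integral closures in two moves. First I would show the analytic fact $f\in\overline{J_f}$ via the valuative (arc) criterion for integral closure: for any arc $\gamma(t)$, differentiating yields $\tfrac{d}{dt}f(\gamma(t)) = \sum_i (\partial f/\partial x_i)(\gamma(t))\,\gamma_i'(t)$, whence $\operatorname{ord}_t f(\gamma(t)) \ge 1 + \min_i \operatorname{ord}_t (\partial f/\partial x_i)(\gamma(t))$, certifying that $f$ is integral over $J_f$. Second I would invoke the Briançon--Skoda theorem: since $A$ is regular and $J_f$ is generated by the $n+1$ partials, $\overline{J_f^{\,n+1}}\subseteq J_f$. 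Combining these with the general inclusion $(\overline{J_f})^{\,n+1}\subseteq\overline{J_f^{\,n+1}}$ gives
\[
f^{n+1}\in (\overline{J_f})^{\,n+1}\subseteq \overline{J_f^{\,n+1}}\subseteq J_f,
\]
so $f^{n+1}M=0$ and hence $N\le n$. Feeding this into the filtration bound yields $\mu\le(n+1)\tau$. I expect the nontrivial content to sit entirely in the two integral-closure inputs; the filtration estimate is elementary, and everything else is bookkeeping.
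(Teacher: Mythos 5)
The paper gives no proof of this statement at all---it is quoted directly from \cite{liu2018milnor} as background---so the only comparison available is with Liu's original argument, which your proposal reproduces essentially verbatim: $f\in\overline{J_f}$ via the valuative (arc) criterion, $f^{n+1}\in J_f$ via Brian\c{c}on--Skoda, and then $\mu\le(n+1)\tau$ from the $f$-adic filtration of the Milnor algebra, each graded piece being a quotient of the Tjurina algebra. Your proof is correct; the only minor care needed is that $A$ here is a \emph{formal} power series ring, so one should use the arc criterion over $\mathbb{C}[\![t]\!]$ and the Lipman--Sathaye form of the Brian\c{c}on--Skoda theorem for regular local rings, after which the argument goes through unchanged.
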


In \cite{DimcaGreuel}, $\mu \leq \frac{4}{3}\tau$ was proposed for the case of plane curves, which was later proven in \cite{2020minimal} and \cite{2019saitos} separately for the case of irreducible curves. In \cite{2021Almiron}, the bound for all curves was proven. Moreover, in \cite{2021Almiron}, it is proven that if the Durfee conjecture holds for a hypersurface $X$ of dimension 2 then, $\mu \leq \frac{3}{2}\tau$.

For complete intersection singularities, a bound of  $\mu \leq \frac{4}{3}\tau$ was proved for certain families of curve in \cite{2021Almiron}. We prove an existential bound for the case of complete intersection as follows.

\newcommand{\milnorTjurinafst}{Let $X \subset \Spec(A)$ be an isolated complete intersection singularity of dimension n with Tjurina number $\tau$. Then there } 
\newcommand{\milnorTjurinasnd}{exists a constant $c>0$ \st $\mu(X) \leq c$. This $c$ depends only on $\tau$ and $n$.}

\begin{thmn}[\ref{milnor tjurina}]
\milnorTjurinafst
abstractly\footnote{We use the word abstractly to indicate that what we give is an existential argument. More precisely, what we prove is that the invariant is not infinite so we conclude that a bound exists.}
\milnorTjurinasnd
\end{thmn}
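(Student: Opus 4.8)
The plan is to prove that the supremum
$$c(\tau,n) := \sup\{\mu(X) : X \text{ an ICIS of dimension } n \text{ with Tjurina number } \tau\}$$
is finite, without ever producing its value; this is the ``abstract'' assertion anticipated in the footnote. The first step is to confine all competing singularities to one finite-dimensional space of jets. Finiteness of $\tau$ makes the cotangent module $T^{1}_{X}$, whose $\C$-dimension is $\tau$, Artinian, hence annihilated by $\m^{N}$ for some $N$ bounded by a function of $\tau$ alone. By the finite-determinacy theorem for complete intersection germs this bounds the $\mathcal{K}$-determinacy order of a defining tuple $(f_1,\dots,f_k)$ by a function of $\tau$, so $X$ is contact-equivalent to a singularity defined by polynomial data of degree $\le N$. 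Since contact equivalence preserves both $\mu$ and $\tau$, every such $X$ is then represented, up to this equivalence, by a point of the finite-dimensional jet space $J^{N}(n+k,k)$.

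Next I would remove the dependence on the codimension $k$, as the statement permits $c$ to depend only on $\tau$ and $n$. Passing to a minimal embedding alters none of $\mu$, $\tau$, $n$, and there every $f_i$ lies in $\m^{2}$; hence the Jacobian map whose cokernel computes $T^{1}_{X}$ sends the ambient vector fields into $\m\cdot N_X$, where $N_X\cong\O_X^{k}$ is the normal module. Consequently $T^{1}_{X}$ surjects onto $N_X/\m N_X\cong\C^{k}$, so $\tau\ge k$ and therefore $n+k\le n+\tau$. Thus only finitely many ambient dimensions occur, the codimension-one case being the cited hypersurface theorem, and it suffices to bound $\mu$ for each fixed $(n,k)$ separately.

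These two reductions place every singularity under consideration inside a single parameter object of finite type over $\C$, and this is precisely where the A$\m$AC construction of \cite{2022ParamMohit} enters: it packages the finitely-determined, minimally-embedded, bounded-codimension data into one quasi-compact family whose fibres exhaust the ICIS of dimension $n$ with Tjurina number $\tau$. On such a family $X\mapsto\mu(X)$ is constructible — it is upper semicontinuous in flat families of isolated complete intersections and the stratification by $\tau$ is algebraic — so it assumes only finitely many values, whose maximum is the desired $c=c(\tau,n)$.

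The decisive and hardest step is the construction of this single finite-type family carrying a constructible $\mu$. The determinacy bound and the length estimate $\tau\ge k$ are concrete and essentially routine; the real difficulty is that the germs live over the complete local ring $\C[\![x_1,\dots,x_{n+k}]\!]$, so passing from formal data to an honest scheme of finite type over $\C$ — on which quasi-compactness and the semicontinuity of $\mu$ can be applied — is exactly what the $\m$-adic approximation machinery of A$\m$AC must supply in place of a naive jet-space argument. Certifying $\sup\mu<\infty$ by this route, rather than estimating it, is what renders the bound existential.
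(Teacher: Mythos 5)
Your two preliminary reductions are sound: $\len(T^1_X)=\tau$ forces $\m^{\tau}T^1_X=0$, which via the quantitative determinacy theorem bounds the contact-determinacy order by a function of $\tau$, and the observation that in a minimal embedding the Jacobian map lands in $\m\cdot\O_X^{k}$ (so $\tau\geq k$) is correct --- it is a legitimate substitute for the paper's appeal to the mini-versal deformation when Theorem \ref{tjurina critical} fixes $k=\tau$. But everything after that \emph{is} the theorem, and it is missing. The assertion that the ICIS with invariants $(\tau,n)$ sit in ``one quasi-compact family'' on which $X\mapsto\mu(X)$ is constructible is exactly what has to be proven, and you concede as much in your final paragraph while deferring it to the A$\m$AC machinery. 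Two concrete obstructions block your route as written. First, for $k\geq 2$ the Milnor number is not a colength, so the loci $\{\mu\geq m\}$ have no evident algebraic description; some device such as the L\^e--Greuel formula is needed, and this is precisely the role of Lemma \ref{inequality lemma} in the paper ($\mu(X)\leq e(\O_{C_f},\langle f_1,\dots,f_k\rangle)$), a step entirely absent from your argument. Second, even for colength functions, the condition $\len(A/I(f))\geq m$ on a jet $f$ is an increasing countable union, over truncation levels $t$, of the closed conditions $\len(A/(I(f)+\m^{t}))\geq m$, and such a union need not stabilize to a constructible set; the ``upper semicontinuity of $\mu$ in flat families'' you invoke is the analytic-topology statement for germs and does not by itself deliver the Zariski constructibility that your Noetherian finiteness argument requires.

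Moreover, your description of what A$\m$AC supplies is not what it does, so the gap cannot be closed in the way you indicate. A$\m$AC classes are not finite-type parameter spaces: they are inverse limits $\varprojlim P_i$ of constructible subsets $P_i\subset (A/\m^{i})^{k}$, i.e.\ pro-constructible subsets of the infinite-dimensional space $A^{k}$, and the paper never reduces to a jet space nor proves constructibility of $\mu$. Instead it encodes ``non-icis'' as the class $D(1)$, encodes ``icis with critical-locus multiplicity $\geq e$, or non-icis'' as $C(e)$, notes $D(1)=\bigcap_e C(e)$, and then applies the compactness property of Lemma \ref{countable intersection lemma}: if the multiplicity were unbounded on $T(\tau)$, every finite intersection $T(\tau)\cap C(e)$ would be nonempty, hence $T(\tau)\cap D(1)\neq\emptyset$, producing a tuple that simultaneously is and is not an icis. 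That contradiction argument --- not a semicontinuity or stratification statement on a finite-dimensional family --- is the engine of the proof, and imposing conditions at every truncation level simultaneously (rather than at one fixed determinacy order) is exactly how it sidesteps the stabilization problem that blocks your jet-space approach.
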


Here $\mu(X)$ refers to the Milnor number, for a definition see \cite[5.11]{LooijengaICIS}.

\section{Preliminaries}

Let $R$ be a noetherian ring of dimension $d$ with a maximal ideal $\m$. Let $I$ be an $\m$-primary ideal. We will  use $\len(M)$ to denote length of a module $M$. 

\begin{defn}\label{multi definition}
Define Samuel function $\chi_{R}^{I}(t) := \len(R/I^{t+1})$.
If the maximal ideal is clear from the context and $I=\m$, then we will drop the $\m$ in the notation.
We take the definition of multiplicity as defined in \cite[Formula 14.1]{MatsumuraCRT}.
$$e(R,I) = \lim_{t\to\inf} d! \frac{\len(R/I^{t+1})}{t^d} $$
\end{defn}

We will use the following fundamental result of dimension theory.
\begin{lemma}\label{dimension}
Let $R$ be a noetherian ring with Krull dimension $d$ with maximal ideal $\m$. Let $I$ be an $\m$-primary ideal.
Then for $t$ sufficiently large, $\chi^{I}_{R}(t) = p(t)$, where $p(t)$ is a rational polynomial of degree $d$ (called Samuel polynomial). Furthermore, the coefficient of $x^d$ is $e(R,I)/d!$.
\end{lemma}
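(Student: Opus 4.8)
The plan is to prove this as the classical Hilbert--Samuel theorem. Since $I$ is $\m$-primary, there is an integer $s$ with $\m^{s}\subseteq I\subseteq\m$; hence every $R/I^{t+1}$ is annihilated by a power of $\m$ and so has finite length, making $\chi^{I}_{R}(t)$ well defined. Because each $R/I^{t+1}$ is supported only at $\m$, its length is unaffected by localization at $\m$, so I may assume $R$ is local with $\dim R=d$.

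First I would establish eventual polynomiality by passing to the associated graded ring $G=\operatorname{gr}_{I}(R)=\bigoplus_{n\ge 0}I^{n}/I^{n+1}$, which is Noetherian and generated in degree one over the Artinian ring $R/I$. By the Hilbert--Serre theorem on graded modules over such a ring, the length $\len(I^{n}/I^{n+1})$ of the $n$-th graded piece agrees, for $n\gg 0$, with a polynomial $Q(n)$. Since $\chi^{I}_{R}(t)=\sum_{n=0}^{t}\len(I^{n}/I^{n+1})$ and summation sends an eventually polynomial function to one of degree one higher, $\chi^{I}_{R}(t)=p(t)$ for $t\gg 0$ with $p$ a rational polynomial; if $Q$ has degree $d'-1$ and leading coefficient $a$, then $p$ has degree $d'$ and leading coefficient $a/d'$.

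It then remains to show $d'=d$ and to read off the leading coefficient. The core is the dimension theorem, $\deg p=\dim R$. I would first handle $I=\m$, proving $\deg\chi^{\m}_{R}=\dim R$ through the standard chain of inequalities among three invariants: the Krull dimension $\dim R$, the degree of the Hilbert--Samuel polynomial, and the least number $\delta(R)$ of generators of an $\m$-primary ideal (a system of parameters). One direction rests on Krull's principal ideal theorem, the other on an inductive argument cutting down by a parameter; together they force the three to agree. To pass to a general $\m$-primary $I$, the containments $\m^{s(t+1)}\subseteq I^{t+1}\subseteq\m^{t+1}$ yield $\chi^{\m}_{R}(t)\le\chi^{I}_{R}(t)\le\chi^{\m}_{R}\bigl(s(t+1)-1\bigr)$, sandwiching $\deg\chi^{I}_{R}$ between two copies of $\deg\chi^{\m}_{R}$ and so giving $\deg\chi^{I}_{R}=d$. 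Finally, once $\deg p=d$ is known, the defining formula $e(R,I)=\lim_{t\to\infty}d!\,\len(R/I^{t+1})/t^{d}$ says precisely that the leading coefficient of $p$ is $e(R,I)/d!$.

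The main obstacle is the identification $\deg p=\dim R$: eventual polynomiality is essentially formal once Hilbert--Serre is invoked, but equating the degree with the Krull dimension requires the full dimension theorem, which rests on Krull's Hauptidealsatz and the theory of systems of parameters. This is the step where I would either reproduce the three-way inequality in detail or, since the result is flagged here as a fundamental fact of dimension theory, simply cite it (e.g.\ \cite{MatsumuraCRT}).
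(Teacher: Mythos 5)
Your proposal is correct and follows essentially the same route as the paper: the paper's proof is simply a citation of \cite[Theorem 13.4 and Section 14]{MatsumuraCRT}, and what you write out---reduction to the local case, eventual polynomiality via $\operatorname{gr}_I(R)$ and Hilbert--Serre, the dimension theorem for $\deg p = d$, the sandwich $\m^{s(t+1)}\subseteq I^{t+1}\subseteq\m^{t+1}$ to pass from $\m$ to $I$, and reading the leading coefficient off the paper's limit definition of $e(R,I)$---is precisely the standard argument contained in that reference. Since you also note one may simply cite \cite{MatsumuraCRT} for the dimension-theoretic core, the two proofs coincide in substance, yours just being the unpacked version.
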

\begin{proof}For the degree of polynomial see \cite[Theorem 13.4]{MatsumuraCRT}. For the multiplicity as coefficient see \cite[Section 14]{MatsumuraCRT}\end{proof}

\begin{lemma}\label{dim bound lemma}
Let $R$ be a noetherian ring of dimension $d$ with a maximal ideal $\m$ \st $R/\m$ is infinite. Let $\chi^I_R(t)$ be the Samuel function of $R$ w.r.t. $I$ which is an $\m$-primary ideal. Then 
\begin{equation}\chi^I_R(t) \geq {t+d\choose d}\end{equation}
Let $e$ be the multiplicity of $R$ w.r.t  $\m$. Then we can obtain better bounds as follows.
\\
For $t \leq e-1$,
\begin{equation}\chi^I_R(t) \geq {t+d+1\choose d+1} \end{equation} 
For $t \geq e - 1$,
\begin{equation}\chi^I_R(t) \geq \sum_{i=0}^{d} (-1)^{i} {e \choose i+1}{t+d-i\choose d-i} = e{t+d\choose d} + \text{lower order terms (of $t$)} \end{equation}
\end{lemma}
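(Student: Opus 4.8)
\emph{Plan.} Since $I$ is $\m$-primary, $R/I^{t+1}$ is supported only at $\m$, so we may localise and assume $(R,\m)$ is local; the hypothesis that $R/\m$ is infinite will be used for a Noether normalisation by linear forms. The first reduction is to replace $I$ by $\m$: because $I\subseteq\sqrt I=\m$ we get $I^{t+1}\subseteq\m^{t+1}$, hence a surjection $R/I^{t+1}\twoheadrightarrow R/\m^{t+1}$ and $\chi^I_R(t)\ge\chi^\m_R(t)$. Thus it suffices to bound $\chi^\m_R(t)=\sum_{j=0}^t\len(\m^j/\m^{j+1})=\sum_{j=0}^t\dim_k G_j$, where $k=R/\m$ and $G=\bigoplus_{j\ge0}\m^j/\m^{j+1}$ is the associated graded ring, a standard graded $k$-algebra (generated in degree $1$) of Krull dimension $d$ whose multiplicity equals $e$. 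Using that $k$ is infinite, choose a homogeneous system of parameters $\theta_1,\dots,\theta_d\in G_1$; the $\theta_i$ are algebraically independent, so $P:=k[\theta_1,\dots,\theta_d]\hookrightarrow G$ is an injective map of graded rings, $G$ is a finite $P$-module, and its generic rank $\dim_{\operatorname{Frac}(P)}(G\otimes_P\operatorname{Frac}(P))$ is exactly $e$.

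For the first inequality, the graded injection $P\hookrightarrow G$ gives $\dim_k G_j\ge\dim_k P_j=\binom{j+d-1}{d-1}$, and summing over $0\le j\le t$ with the hockey-stick identity yields $\chi^\m_R(t)\ge\binom{t+d}{d}$, which is $(1)$.

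For the refined bounds I would produce a graded free $P$-submodule $F=\bigoplus_{i=0}^{e-1}P(-a_i)\hookrightarrow G$ of rank $e$ with $a_i\le i$. Set $W_j\subseteq M:=G\otimes_P\operatorname{Frac}(P)$ equal to the $\operatorname{Frac}(P)$-span of the images of $G_0,\dots,G_j$ and $r_j=\dim W_j$; a homogeneous $\operatorname{Frac}(P)$-basis compatible with this filtration has exactly $r_j-r_{j-1}$ elements in degree $j$. The crucial point is that the filtration has no gaps: if $W_j=W_{j-1}$ then, since $G$ is standard graded, $G_{j+1}=G_1G_j$, so the images of $G_{j+1}$ lie in $G_1\cdot W_{j-1}\subseteq W_j=W_{j-1}$, and inductively $W_{j'}=W_{j-1}$ for all $j'\ge j$. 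As $r_j\to e$, this forces $r_j\ge\min(j+1,e)$, so the basis is attained in degrees $\le e-1$ and the $i$-th basis vector may be taken of degree $a_i\le i$. These elements are $\operatorname{Frac}(P)$-independent, so $F\hookrightarrow G$ is injective, whence $\chi^\m_R(t)\ge\chi_F(t)=\sum_{i=0}^{e-1}\binom{t-a_i+d}{d}\ge\sum_{i=0}^{e-1}\binom{t-i+d}{d}=\binom{t+d+1}{d+1}-\binom{t-e+d+1}{d+1}$. For $t\le e-1$ the second binomial vanishes and this is $(2)$; for $t\ge e-1$ a routine binomial identity (immediate from the generating function $\frac{1-z^e}{(1-z)^{d+2}}$) rewrites the right-hand side as $\sum_{i=0}^d(-1)^i\binom{e}{i+1}\binom{t+d-i}{d-i}$, giving $(3)$; in particular $(1)$ also follows.

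The main obstacle is the degree-control step, namely showing the generic rank of $G$ over $P$ is attained by degree $e-1$ with the bound $a_i\le i$. This is precisely where standard-gradedness ($G_{j+1}=G_1G_j$) is essential, via the no-gaps property of $\{W_j\}$; the identification of the multiplicity $e$ with the generic $P$-rank of $G$, and the equivalence of the two closed forms for the extremal Hilbert--Samuel function, are then bookkeeping.
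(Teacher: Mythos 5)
Your proof is correct, but it does not follow the paper's route, because the paper gives no argument here at all: it cites \cite[Lemma 2.6]{2022ParamMohit}, where the statement is proved for $I=\m$, and asserts that ``the same proof generalizes directly'' to any $\m$-primary $I$. Your argument differs in two useful ways. First, your opening reduction makes that asserted generalization formal rather than a matter of re-running a proof: since $I\subseteq\sqrt{I}=\m$ gives $I^{t+1}\subseteq\m^{t+1}$ and hence $\chi^I_R(t)\geq\chi^{\m}_R(t)$, and since all three bounds involve only $d$ and the multiplicity $e$ of $R$ with respect to $\m$ (not with respect to $I$), the general case is an immediate consequence of the case $I=\m$. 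Second, you then prove the $I=\m$ case from scratch: pass to the associated graded ring $G=\bigoplus_j \m^j/\m^{j+1}$, use the infiniteness of $R/\m$ to get a linear Noether normalization $P=k[\theta_1,\dots,\theta_d]\hookrightarrow G$, identify $e$ with the generic rank of $G$ over $P$, and exploit the no-gaps property of your filtration $W_j$ (which is exactly where standard gradedness $G_{j+1}=G_1G_j$ enters) to build a graded free submodule $\bigoplus_{i=0}^{e-1}P(-a_i)\hookrightarrow G$ with $a_i\leq i$; the three inequalities follow by summing Hilbert functions, and your binomial identity does hold precisely in the stated range $t\geq e-1$ (the terms with $i>d$ arising from the expansion of $(1-z^e)/(1-z)^{d+2}$ contribute nothing once $t\geq e-1$). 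What each approach buys: the paper's citation is short but leaves the reader to consult and adapt an external proof, while your version is self-contained and verifiable in place, and the reduction to $I=\m$ is cleaner than the paper's ``generalizes directly.'' One caveat applies equally to your write-up and to the paper's statement: the lemma is only true if $\dim R_{\m}=d$ (e.g.\ if $R$ is local), since $\len(R/I^{t+1})$ is computed in $R_{\m}$ and grows like a polynomial of degree $\dim R_{\m}$, which for non-local $R$ can be smaller than $\dim R$; your phrase ``we may localise and assume $(R,\m)$ is local'' silently uses this hypothesis.
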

Note that ${t+d\choose d}$ is the Samuel polynomial of a regular local ring of dimension $d$
\begin{proof}
In \cite[lemma 2.6]{2022ParamMohit}, this lemma is proven for $I = \m$, The same proof generalizes directly for any $\m$-primary ideal $I$.
\end{proof}

\begin{defn}\label{singular Definition}
Let $\A^k = \Spec \C[x_1,...,x_k]$. 
Let $f = (f_1,...,f_{k}) : Spec A \to \A^{k}$ and let $X := f^{-1}(0)$ considered as a closed subscheme of $\Spec(A)$.
Define $\S^{\a}_{X}$ as the scheme of ``all points $p \in X$ \st $\dim(T_p(X)) > \a$", whose scheme structure is given as follows.

$$\S^{\a}_{X} := X \cap \Spec\frac{A}{J_{{n+k} -\a}(f)} = \Spec\frac{A}{I(X)+J_{{n+k} -\a}(f)}$$
where $J_{{n+k}-\a}(f)$ is the ideal generated by ${n+k}-\a \cross {n+k}-\a$ minors of the Jacobian matrix $(\frac{\doe f_i}{\doe x_j})$.

As seen in \cite[4.D]{LooijengaICIS}, $J_{{n+k}-\a}(f)$ corresponds to the Fitting ideal of the module $\W_f := Coker(A^k = (f^*(\W_{\A^k})) \xra{Jac(f_i)} A^{n+k} (= \W_{\Spec A})$, so the scheme $\S^{\a}_X$ does not depend on the choice of generators of the ideal.

We define the critical locus of the map $f$ as the scheme of ``all points $p \in \Spec A$ \st $rank (Df_p(X)) < k$", whose scheme structure is given as follows.

$$C_f := \Spec\frac{A}{J_{k}(f)}$$
\end{defn}

\begin{defn}
Let $f = (f_1,...,f_{k}) : Spec A \to \A^{k}$. Let $X$ be the closed subscheme of $\Spec(A)$ defined as $f^{-1}(0)$. Then $X$ is called an \textbf{isolated compete intersection singularity} (icis) if $\dim(\S^n_X) = 0$. 

Following \cite[6.B]{LooijengaICIS},
if $X$ is an icis then we define $T^1_X$ as the cokernel of $T(\Spec A)|_X \to f^{*}T(\A^{k})|_X$. Since the support of $T^1_X$ is on singular locus which is of dimension 0. So we define the Tjurina number $\tau(X)$ as the $\len (T^1_X)$.
\end{defn}

\begin{rmk}
Note that as defined above, $(f_1,...,f_{k})$ define a complete intersection iff $(f_1,...,f_{k})$ form a regular sequence. This we will take as a convention.
\end{rmk}

Here we define algebraic $\m$-adically closed subsets introduced in \cite{2022ParamMohit}, on which the theory is built.

\begin{defn}
A subset $P \subset A^{k}$ (Cartesian  product) is said to be an \textbf{algebraic $\m$-adically closed (A$\m$AC) class}, if $P = \ula{\lim} P_i$ where $P_i$ are constructible subsets of $(A/\m^i)^{k}$ (in Zariski topology).

If $U$ is a set complement of an A$\m$AC then we refer to it as an \textbf{algebraic $\m$-adically open (A$\m$AO) class}.

Finally, when we say "Let $P =  \ula{\lim}P_i$ be an A$\m$AC class in $A^{k}$" , It will be assumed that $P_i \subset (A/\m^i)^{k}$ and $P \subset A^{k}$
\end{defn}

We restate some  properties of A$\m$AC proved in \cite{2022ParamMohit}

\begin{lemma}\cite{2022ParamMohit}[lemma 3.6]\label{basic lemma}
Let $P^1 = \ula{\lim}P^1_i$, $P^2 = \ula{\lim}P^2_i$,..., $P^j = \ula{\lim}P^j_i$ ,...  be an infinite sequence of A$\m$AC classes in $A^{k}$. 
\begin{enumerate}
\item A$\m$AC classes are closed under finite union. ($P^1 \cup P^2 =  \ula{\lim} P^1_i \cup P^2_i$)
\item A$\m$AC classes are closed under finite intersections. ($P^1 \cap P^2 =  \ula{\lim} P^1_i \cap P^2_i$)
\item A$\m$AC classes are closed under countable intersections. ($\bigcap_j P^j =  \ula{\lim} \bigcap_{j\leq i} P^j_i$ )
\end{enumerate}
\end{lemma}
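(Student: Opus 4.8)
The plan is to turn the whole problem into a boundedness statement for a single numerical invariant on one A$\m$AC class. Since $A=\C[\![x_1,\dots,x_{n+k}]\!]$ fixes the ambient, an ICIS of dimension $n$ has codimension $k$ fixed, so it is cut out by a tuple $f=(f_1,\dots,f_k)\in A^k$; after first observing that the embedding dimension of an ICIS is bounded in terms of $(\tau,n)$ (so only finitely many ambient $A$ occur), it suffices to fix $A$ and a Tjurina bound $\tau_0$ and to bound $\mu$ over
$$P := \{\, f\in A^k \;:\; f \text{ defines an ICIS } X_f \text{ of dimension } n,\ \tau(X_f)\leq \tau_0 \,\}.$$
First I would show $P$ is an A$\m$AC class. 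The conditions that the $f_i$ form a regular sequence and that $\dim \S^n_{X_f}=0$ are cut out by vanishing and non-vanishing of the $f_i$ together with minors of the Jacobian, hence are constructible at each truncation $A/\m^i$. The delicate condition is $\tau(X_f)=\len(T^1_{X_f})\leq\tau_0$: since $T^1_{X_f}$ is Artinian of length $\leq\tau_0$, its annihilator contains a fixed power $\m^{N}$ with $N=N(\tau_0)$, so the truth of $\tau(X_f)\leq\tau_0$ depends only on the $N$-jet of $f$. Lemma~\ref{dim bound lemma} is what pins down such an $N$ uniformly, by forcing a power of $\m$ into the relevant ideal once a colength is bounded. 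Intersecting these finite-level conditions via Lemma~\ref{basic lemma} exhibits $P=\ula\lim P_i$.

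Next I would stratify $P$ by the Milnor number: for $m\in\mathbb{Z}_{\geq 0}$ set
$$P_m := \{\, f\in P \;:\; \mu(X_f)\geq m \,\}.$$
The key point is that $\mu(X_f)$ is again a colength of an explicit ideal built from $f$ and the minors of its Jacobian: by the Lê--Greuel recursion (\cite{LooijengaICIS}) one writes $\mu(X_f)$, modulo lower-dimensional Milnor numbers, as $\dim_\C \O/(f_1,\dots,f_{k-1},\ \text{maximal minors})$, each such colength being finite exactly because $X_f$ is isolated. Arguing as for $\tau$ — a length bound pushes a power of $\m$ into the ideal — the predicate ``$\mu(X_f)\geq m$'' is determined by a finite jet and is constructible at each finite level, so each $P_m$ is A$\m$AC, and $P=P_0\supseteq P_1\supseteq P_2\supseteq\cdots$ is nonincreasing.

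Finally I would combine finiteness of $\mu$ with the compactness built into the A$\m$AC formalism. For every $f\in P$ the germ $X_f$ is an ICIS, so $\mu(X_f)<\infty$, whence $\bigcap_{m\geq 0}P_m=\{f\in P:\mu(X_f)=\infty\}=\emptyset$. By Lemma~\ref{basic lemma}(3) this intersection is itself A$\m$AC, equal to $\ula\lim\big(\bigcap_{m\leq i}(P_m)_i\big)$, and it is empty. The decisive step is the compactness property of A$\m$AC classes imported from \cite{2022ParamMohit} (the $\m$-adic analogue of ``a decreasing chain of nonempty closed subsets of a compact space has nonempty intersection''): an empty nonincreasing countable intersection of A$\m$AC classes must already be empty at a finite stage, so $P_M=\emptyset$ for some $M$. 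This $M$ is produced existentially, with no effective control, and gives $\mu(X_f)\leq M-1$ for all $f\in P$; taking $\tau_0=\tau$ yields a constant $c=c(\tau,n)$ with $\mu(X)\leq c$, as required.

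I expect two steps to carry the weight. The first is verifying that the super-level sets $P_m$ are genuinely A$\m$AC, which amounts to expressing $\mu$ uniformly as a constructible function of bounded jets; here the length estimates of Lemma~\ref{dim bound lemma}, applied both to ``$\tau\leq\tau_0$'' and to the Lê--Greuel colengths, convert finiteness into a uniform determinacy order and do the real work. The second, and the main obstacle, is the passage from ``the countable intersection is empty'' to ``a finite stage is already empty'': this is the substitute for compactness, the one place where an A$\m$AC input beyond the closure properties of Lemma~\ref{basic lemma} is essential, and it is precisely what makes the resulting bound purely existential rather than effective.
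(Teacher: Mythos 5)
Your proposal does not address the statement it was supposed to prove. The statement is Lemma \ref{basic lemma}: the closure of A$\m$AC classes under finite unions, finite intersections, and countable intersections, together with the explicit level-wise descriptions $P^1\cup P^2=\ula{\lim}\,(P^1_i\cup P^2_i)$, $P^1\cap P^2=\ula{\lim}\,(P^1_i\cap P^2_i)$, and $\bigcap_j P^j=\ula{\lim}\,\bigcap_{j\leq i}P^j_i$. What you have written is instead a sketch of the paper's \emph{main theorems} (Theorems \ref{tjurina critical} and \ref{milnor tjurina}): you build A$\m$AC classes of tuples with prescribed Tjurina number, stratify by Milnor number, and run a compactness argument to extract an existential bound. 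Worse, as a proof of Lemma \ref{basic lemma} your argument is circular: you explicitly invoke ``Lemma \ref{basic lemma}(3)'' and ``intersecting these finite-level conditions via Lemma \ref{basic lemma}'' as tools, i.e.\ you assume exactly the closure properties you were asked to establish. (You also misattribute the compactness step: the passage from an empty countable intersection to emptiness at a finite stage is Lemma \ref{countable intersection lemma}, which rests on the non-emptiness lemma of \cite{2022ParamMohit}, not on Lemma \ref{basic lemma} alone --- a further sign that your text is aimed at the wrong target.)

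What a proof of the lemma actually requires is short but not vacuous, and none of it appears in your writeup. At each finite level, constructible subsets of the affine space $(A/\m^i)^{k}$ are closed under finite unions and finite intersections, so $P^1_i\cup P^2_i$, $P^1_i\cap P^2_i$, and $\bigcap_{j\leq i}P^j_i$ (a \emph{finite} intersection at each level $i$, which is the trick making the countable case work) are again constructible; one must then verify the set-theoretic equalities of the limits. For intersections this is immediate: a tuple truncates into $P^1_i\cap P^2_i$ for every $i$ iff it truncates into both families separately, and likewise $\bigcap_j P^j = \ula{\lim}\,\bigcap_{j\leq i}P^j_i$ since every fixed $j$ is eventually included among the conditions. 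For unions, the inclusion $\ula{\lim}\,(P^1_i\cup P^2_i)\subseteq P^1\cup P^2$ is the one point needing care: if $x\bmod\m^i\in P^1_i\cup P^2_i$ for all $i$, then $x\bmod\m^i$ lies in, say, $P^1_i$ for infinitely many $i$, and compatibility of the level systems under the truncation maps $(A/\m^{i+1})^k\to(A/\m^i)^k$ then forces $x\bmod\m^i\in P^1_i$ for \emph{all} $i$, so $x\in P^1$. The paper itself does not reprove this lemma but cites \cite{2022ParamMohit}; a blind proof attempt, however, had to engage with precisely this pigeonhole-plus-compatibility argument, and yours does not.
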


\begin{lemma}\label{countable intersection lemma}
Let $P^1 = \ula{\lim}P^1_i$, $P^2 = \ula{\lim}P^2_i$,..., $P^j = \ula{\lim}P^j_i$ ,...  be an infinite sequence of A$\m$AC classes in $A^{k}$, Then countable intersection of $P_i$ is non-empty iff each finite intersection of $P_i$ is non-empty.
\end{lemma}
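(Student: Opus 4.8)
The plan is to prove the substantive implication (finite intersections nonempty $\Rightarrow$ countable intersection nonempty); the converse is immediate since $\bigcap_j P^j \subseteq \bigcap_{j \le N} P^j$. First I would package the countable intersection using Lemma \ref{basic lemma}(3): set $Q := \bigcap_j P^j = \ula{\lim}_i R_i$ with $R_i := \bigcap_{j \le i} P^j_i \subseteq (A/\m^i)^k$, which is constructible. The hypothesis is used exactly to see that every level is nonempty: a point of the nonempty finite intersection $\bigcap_{j \le i} P^j$ has its truncation modulo $\m^i$ lying in $R_i$, so $R_i \ne \emptyset$ for all $i$. Writing $\pi_{i,i'} : (A/\m^{i'})^k \to (A/\m^i)^k$ for the (linear, hence algebraic) truncation maps, compatibility of the defining systems gives $\pi_{i,i+1}(R_{i+1}) \subseteq R_i$. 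The goal becomes: produce a compatible sequence $a_i \in R_i$ with $\pi_{i,i+1}(a_{i+1}) = a_i$, since such a sequence is exactly a point of $A^k = \ula{\lim}_i (A/\m^i)^k$ lying in $Q$.

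The heart of the argument is the following Key Lemma, which is where the uncountability of $\C$ enters: if $C_1 \supseteq C_2 \supseteq \cdots$ is a descending chain of nonempty constructible subsets of a finite-dimensional affine space over $\C$, then $\bigcap_n C_n \ne \emptyset$. I would prove it by first passing to Zariski closures $\overline{C_n}$: these form a descending chain of closed sets, so by Noetherianity they stabilize to some nonempty $Z$, and for all large $n$ the set $C_n$ is dense in $Z$. Density in $Z$ forces $C_n$ to contain a dense open subset of each irreducible component $W$ of $Z$. Fixing one component $W$, the sets $C_n \cap W$ then contain dense open subsets $U_n \subseteq W$, and $\bigcap_n U_n \ne \emptyset$ because an irreducible variety over the uncountable field $\C$ is not a countable union of proper closed subvarieties (equivalently, a countable intersection of dense opens is dense). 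Hence $\bigcap_n C_n \ne \emptyset$. Note that DCC fails for constructible sets, so the closure-stabilization step alone is not enough; the uncountability of $\C$ is genuinely needed.

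With the Key Lemma in hand I would finish as follows. For $i' \ge i$ let $C_{i,i'} := \pi_{i,i'}(R_{i'})$, which is constructible by Chevalley's theorem and nonempty since $R_{i'} \ne \emptyset$; the containment $\pi_{i,i+1}(R_{i+1}) \subseteq R_i$ makes $\{C_{i,i'}\}_{i' \ge i}$ a descending chain in the fixed space $(A/\m^i)^k$. Define the stable image $R_i^\infty := \bigcap_{i' \ge i} C_{i,i'}$, which is nonempty by the Key Lemma. Two points remain. First, $R_i^\infty \subseteq R_i$ (as $C_{i,i} = R_i$). Second, the transition maps $R_{i+1}^\infty \to R_i^\infty$ are surjective: given $a_i \in R_i^\infty$, for each $i' \ge i+1$ the fiber $F_{i'} := \{ b \in C_{i+1,i'} : \pi_{i,i+1}(b) = a_i\}$ is nonempty (lift $a_i$ through $R_{i'}$) and constructible, and these form a descending chain inside the affine fiber $\pi_{i,i+1}^{-1}(a_i)$; applying the Key Lemma again gives a point of $\bigcap_{i'} F_{i'} = \{b \in R_{i+1}^\infty : \pi_{i,i+1}(b) = a_i\}$. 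Starting from any $a_1 \in R_1^\infty$ and lifting inductively produces the required compatible sequence $a_i \in R_i^\infty \subseteq R_i$, hence a point of $Q$, completing the proof.

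On obstacles: the crux is entirely the Key Lemma, and within it the failure of the descending chain condition for constructible sets. The resolution---stabilizing closures by Noetherianity and then invoking that $\C$ is uncountable---is the one genuinely nonformal ingredient; everything else is bookkeeping with Chevalley's theorem and the inverse-limit description of A$\m$AC classes from Lemma \ref{basic lemma}. The secondary delicate point is surjectivity of the transition maps on stable images, but this reduces to the same Key Lemma applied to a chain of fibers.
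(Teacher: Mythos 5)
Your proposal is correct, and its first step (packaging $\bigcap_j P^j$ as the single A$\m$AC class $\ula{\lim} R_i$ with $R_i = \bigcap_{j\le i}P^j_i$ via Lemma \ref{basic lemma}(3), then checking $R_i\ne\emptyset$ for every $i$ from the nonempty finite intersections) is exactly the paper's reduction. The difference is what happens next: the paper finishes in one line by citing the non-emptiness lemma of \cite{2022ParamMohit} (lemma 3.4), which asserts precisely that an A$\m$AC class whose levels are all nonempty is itself nonempty, whereas you prove that criterion from scratch --- Noetherian stabilization of closures plus uncountability of $\C$ to get your Key Lemma on descending chains of nonempty constructible sets, then Chevalley's theorem, the stable images $R_i^\infty$, and surjectivity of the transition maps on stable images to build a compatible sequence. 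So your route is self-contained where the paper's is citation-based: what you gain is an explicit record of where the genuinely non-formal inputs enter (the failure of DCC for constructible sets, and the uncountability of the ground field), and what the paper gains is brevity, since the heavy lifting is delegated to the reference you could not see. One small point to flag: your chain $C_{i,i'} = \pi_{i,i'}(R_{i'})$ is descending in $i'$ only if the defining systems satisfy $\pi_{i,i+1}(P^j_{i+1})\subseteq P^j_i$; this compatibility is implicit in the inverse-limit notation but is not spelled out in the paper's definition of an A$\m$AC class. If one does not wish to assume it, the fix is to replace $\pi_{i,i'}(R_{i'})$ by the finite intersection $\bigcap_{i\le i''\le i'}\pi_{i,i''}(R_{i''})$, which is still constructible, still nonempty (truncate a point of the nonempty finite intersection $\bigcap_{j\le i'}P^j$), and genuinely descending; after this substitution your argument runs unchanged.
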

\begin{proof}
By previous lemma $\bigcap_j P^j =  \ula{\lim} \bigcap_{j\leq i} P^j_i$ 

Now by non emptyness lemma \cite{2022ParamMohit}[lemma 3.4]

\[\bigcap_j P^j \neq \emptyset \iff \ula{\lim} \bigcap_{j\leq i} P^j_i \neq \emptyset  \forall i \iff \bigcap_{j\leq i} P^j  \neq \emptyset \forall i\]
\end{proof}

\section{Main Theorem}

Before we get to the Theorem we would need to prove the following lemma.

\begin{lemma}\label{inequality lemma}
Let $f = (f_1,...,f_k) : \Spec A \to \A^k$. Let $X := f^{-1}(0) \subset \Spec(A)$ be an isolated complete intersection singularity. Then $\mu(X) \leq$ the multiplicity of critical locus w.r.t. $\<f_1,...,f_k\>$
\end{lemma}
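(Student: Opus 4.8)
The plan is to express the Milnor number through the L\^e--Greuel formula as a colength on the critical locus, and then to compare that colength with the desired multiplicity by exploiting the Cohen--Macaulayness of the critical locus together with the theory of reductions. The whole argument is classical commutative algebra once the geometry of $C_f$ is pinned down.

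First I would record the dimension of $C_f$. Write $\O_{C_f} = A/J_k(f)$, where $J_k(f)$ is generated by the maximal minors of the $k \times (n+k)$ Jacobian matrix. By the standard codimension bound for ideals of maximal minors, $\operatorname{codim} J_k(f) \le n+1$, so $\dim C_f \ge k-1$. On the other hand, for an icis the restriction of $f$ to $C_f$ is finite onto the discriminant hypersurface in $\A^k$ (see \cite{LooijengaICIS}), which forces $\dim C_f = k-1$. Since $n+1$ is exactly the expected codimension for maximal minors of a $k\times(n+k)$ matrix, the Eagon--Northcott complex resolves $\O_{C_f}$ with projective dimension $n+1$, and Auslander--Buchsbaum gives $\operatorname{depth}\O_{C_f} = (n+k)-(n+1) = k-1 = \dim \O_{C_f}$; thus $\O_{C_f}$ is Cohen--Macaulay.

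Next I would set $I = \langle f_1,\ldots,f_k\rangle\,\O_{C_f}$, which is $\m$-primary because $X \cap C_f = \S^n_X$ has dimension $0$. As $\C$ is infinite and $I$ is $\m$-primary (so its analytic spread equals $\dim \O_{C_f} = k-1$), $I$ admits a minimal reduction generated by $k-1$ general $\C$-linear combinations $\ell_1,\ldots,\ell_{k-1}$ of $f_1,\ldots,f_k$. By Northcott--Rees, $e(\O_{C_f},I) = e\bigl(\O_{C_f},\langle \ell_1,\ldots,\ell_{k-1}\rangle\bigr)$, and since the $\ell_i$ form a system of parameters on the Cohen--Macaulay ring $\O_{C_f}$, this multiplicity equals the colength $\len\bigl(\O_{C_f}/\langle \ell_1,\ldots,\ell_{k-1}\rangle\bigr)$. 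Finally, complete the tuple to $(\ell_1,\ldots,\ell_k) = g\cdot(f_1,\ldots,f_k)$ for a general $g \in GL_k(\C)$: this leaves $X$ and $J_k$ unchanged, while $Y' := V(\ell_1,\ldots,\ell_{k-1})$ is an icis of dimension $n+1$. The L\^e--Greuel formula then reads
\[
\mu(X) + \mu(Y') \;=\; \dim_\C \frac{A}{\langle \ell_1,\ldots,\ell_{k-1}\rangle + J_k(f)} \;=\; \len\bigl(\O_{C_f}/\langle \ell_1,\ldots,\ell_{k-1}\rangle\bigr) \;=\; e(\O_{C_f},I),
\]
whence $\mu(X) = e(\O_{C_f},I) - \mu(Y') \le e(\O_{C_f},I)$, which is the claim.

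The main obstacle is the input geometry rather than the algebra. The crux is establishing that the critical locus has the expected dimension $k-1$, equivalently that $J_k(f)$ has codimension $n+1$: this single fact simultaneously secures Cohen--Macaulayness via Eagon--Northcott, guarantees that $k-1$ general combinations cut $\O_{C_f}$ down to dimension $0$ (so the minimal reduction behaves correctly), and ensures the right-hand side of the L\^e--Greuel formula is finite. One must also verify that the generic truncation $Y'$ is again an icis so that L\^e--Greuel legitimately applies; both points are standard in icis theory but deserve explicit citation. By contrast, the reduction-theoretic and Cohen--Macaulay inputs ($e(R,I)=e(R,J)$ under reduction, and $e=\len$ for a parameter ideal in a Cohen--Macaulay local ring) are routine.
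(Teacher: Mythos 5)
Your proposal is correct and takes essentially the same route as the paper: both arguments rest on the L\^e--Greuel formula (the paper cites it via Looijenga 5.11.a, stated directly as the inequality $\mu(X) \leq \len(A/(J_k(f)+\langle g_1,\dots,g_{k-1}\rangle))$), on the finiteness and Cohen--Macaulayness of $\O_{C_f}$ (Looijenga 4.4, which you rederive through Eagon--Northcott and Auslander--Buchsbaum), and on the reduction theorem identifying $e(\O_{C_f},\langle f_1,\dots,f_k\rangle)$ with the multiplicity of an ideal generated by $k-1$ general linear combinations (Matsumura 14.14 in the paper, Northcott--Rees in your write-up). The only cosmetic difference is the final step: the paper computes $e(\O_{C_f},\langle g_1,\dots,g_{k-1}\rangle)$ as the rank of $\O_{C_f}$ viewed as a free module over $\C[\![g_1,\dots,g_{k-1}]\!]$, whereas you invoke the equality of multiplicity and colength for a parameter ideal in a Cohen--Macaulay local ring---two formulations of the same fact.
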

\begin{proof}
We use formula from \cite[5.11.a]{LooijengaICIS}, to conclude that 
\[\mu(X) \leq \len(\frac{A}{J_{k}(f)+\<g_1,...,g_{k-1}\>})\]
Here $g_1,...,g_{k-1}$  can be any linear combinations of $f_1,...,f_k$.

Now By \cite[4.4]{LooijengaICIS} $\O_{C_f}$ is finite and Cohen-Macaulay over $\C[\![f_1,...,f_{k}]\!]$ of dimension $k-1$. So for $k-1$ general linear combinations $g_1,...,g_{k-1}$, $\O_{C_f}$ is finite and Cohen-Macaulay over $\C[\![g_1,...,g_{k-1}]\!]$. Now the projective dimension of $\O_{C_f}$ over $\C[\![g_1,...,g_{k-1}]\!]$ is $\dim \C[\![g_1,...,g_{k-1}]\!] - depth(\O_{C_f})$ = $k-1 - \dim \O_{C_f}$ = $0$. So $\O_{C_f}$ is a free module over $\C[\![g_1,...,g_{k-1}]\!]$

We can also assume $e(\O_{C_f} , \<f_1,...,f_{k}\>)$ = $e(\O_{C_f} , \<g_1,...,g_{k-1}\>)$, using \cite[Theorem 14.14]{MatsumuraCRT}. 

Since $\O_{C_f}$ is a free module over $\C[\![g_1,...,g_{k-1}]\!]$. 
\[e(\O_{C_f} , \<g_1,...,g_{k-1}\>) = rank (\O_{C_f}) = \len \frac{\O_{C_f}}{\<g_1,...,g_{k-1}\>}  = \len(\frac{A}{J_{k}(f)+\<g_1,...,g_{k-1}\>})\].

We can finally combine all the above to get
\[\mu(X) \leq \len(\frac{A}{J_{k}(f)+\<g_1,...,g_{k-1}\>}) = e(\O_{C_f} , \<f_1,...,f_{k}\>)\]
\end{proof}

Using the observation that $\mu(X) \leq$ the multiplicity of critical locus w.r.t. $\<f_1,...,f_\tau\>$, bounding $\mu$ reduces to bounding $e(\O_{C_f} , \<f_1,...,f_{k}\>)$. For bounding $e(\O_{C_f} , \<f_1,...,f_{k}\>)$  we require the following A$\m$AC classes. (The proof of the fact that they are A$\m$AC is given in section 4)

\begin{exmp}\label{some example}
\begin{enumerate}
\item $T(r) = \{(f_1,...,f_{k}) \in A^{k} \ |\ (f_1,...,f_{k}) \text{ define an icis and } \tau(f_1,...,f_{k}) = r\}$ (for proof see \ref{Tjurina AmAC})

\item $D(d) := \{(f_1,...,f_{k}) \in A^{k} \ |\ \dim(\S^{n}_{X}) \geq d\}$ (For definition of $\S^{n}_{X}$ see \ref{singular Definition}.) (for proof see \ref{dimension AmAC})

\begin{rmk}
One notes that $(f_1,...,f_{k}) \in D(1)$ iff $(f_1,...,f_{k})$ do \textbf{not} define an icis.
The fact that we have a characterization of not icis as an A$\m$AC will prove very useful.
\end{rmk}

\item 
\begin{align} \nonumber
\begin{split}
C(e) := \{(f_1,...,f_{k}) \in A^{k} \ |&\ \text{either } \dim(\S^{n}_{X}) \geq 1 \text{ or } (f_1,...,f_{k})  \text{ define an icis and } \\ & e(\O_{C_{f}}, \<f_1,...,f_{k}\>) \geq e\} \text{ (for proof see \ref{critical AmAC})}
\end{split}
\end{align}

\end{enumerate}
\end{exmp}

\begin{thm}\label{tjurina critical}
Let $f = (f_1,...,f_k) : \Spec A \to \A^k$. Let $X := f^{-1}(0) \subset \Spec(A)$ be an isolated complete intersection singularity with Tjurina number $\tau$. Then there abstractly exists a constant $c>0$ \st the multiplicity of $C_{f}$ w.r.t. $\<f_1,...,f_k\>$ $\leq c$. This $c$ depends only on $\tau$ and $n$
\end{thm}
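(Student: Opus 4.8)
The plan is to argue by contradiction, pitting the A$\m$AC class $T(\tau)$ against the nested family $C(1), C(2), \dots$ and applying the countable-intersection criterion of Lemma~\ref{countable intersection lemma}. First I would record two structural facts about the classes of Example~\ref{some example}. Every tuple in $T(\tau)$ defines an icis, so on $T(\tau)$ the alternative ``$\dim(\S^n_X)\ge 1$'' in the definition of $C(e)$ is never satisfied; hence
\[ T(\tau)\cap C(e) = \{(f_1,\dots,f_k)\mid (f_1,\dots,f_k)\text{ define an icis},\ \tau(f_1,\dots,f_k)=\tau,\ e(\O_{C_f},\<f_1,\dots,f_k\>)\ge e\}. \]
Moreover the $C(e)$ are decreasing, $C(1)\supseteq C(2)\supseteq\cdots$, since lowering $e$ only weakens the multiplicity threshold while leaving the non-icis alternative untouched. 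Both $T(\tau)$ and every $C(e)$ are A$\m$AC (see \ref{Tjurina AmAC} and \ref{critical AmAC}), so they are admissible inputs to Lemma~\ref{countable intersection lemma}.

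Next, suppose the conclusion fails, so that no finite constant bounds $e(\O_{C_f},\<f_1,\dots,f_k\>)$ across all icis with Tjurina number $\tau$. Then for each positive integer $e$ some such icis has critical multiplicity at least $e$, i.e.\ $T(\tau)\cap C(e)\ne\emptyset$ for every $e$. By the nesting just noted, any finite subfamily of $\{T(\tau),C(1),C(2),\dots\}$ intersects in $T(\tau)\cap C(N)$ with $N$ the largest index present, which is nonempty by hypothesis. Thus every finite intersection of this countable family is nonempty, and Lemma~\ref{countable intersection lemma} delivers a single tuple $f=(f_1,\dots,f_k)$ lying in $T(\tau)\cap\bigcap_{e\ge 1}C(e)$.

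The contradiction is then immediate. Such an $f$ defines an icis with $\tau(f)=\tau$, so by the freeness argument in the proof of Lemma~\ref{inequality lemma} the algebra $\O_{C_f}$ is finite over $\C[\![f_1,\dots,f_k]\!]$ and $e(\O_{C_f},\<f_1,\dots,f_k\>)$ is a single finite integer. But membership in $C(e)$ for all $e$ forces this fixed integer to exceed every $e$, which is absurd. Hence the assumed failure is impossible and the bound $c$ exists. In keeping with the A$\m$AC method the argument only certifies that the supremum of these multiplicities is finite without naming it, which is the force of ``abstractly''; that $c$ may be taken to depend only on $\tau$ and $n$ reflects that $T(\tau)$ and $C(e)$ are defined inside the fixed ring $A=\C[\![x_1,\dots,x_{n+k}]\!]$.

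I expect the compactness step to be short; its weight lies in the two facts it consumes. Within the proof proper the point needing care is simply that the finite intersections are nonempty, which is why the vacuity of the non-icis alternative on $T(\tau)$ and the nesting of the $C(e)$ must be checked first. The genuinely technical input, deferred to Section~4, is that $C(e)$ is A$\m$AC at all: its deliberate bundling of the non-icis locus with the large-multiplicity locus is what secures $\m$-adic closedness, since the bare ``icis with multiplicity $\ge e$'' locus mixes an open (icis) condition with the multiplicity bound and so is not closed on its own.
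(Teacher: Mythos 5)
Your compactness core is sound and is essentially the paper's argument: assume unboundedness, conclude $T(\tau)\cap C(e)\neq\emptyset$ for all $e$, apply Lemma~\ref{countable intersection lemma}, and contradict the finiteness of the critical multiplicity of a single icis. The paper phrases the contradiction through its identity $D(1)=\bigcap_e C(e)$, so that the limiting tuple lies in $T(\tau)\cap D(1)$ --- an icis cannot fail to be an icis --- whereas you unwind $\bigcap_e C(e)$ directly via the finiteness of $e(\O_{C_f},\langle f_1,\dots,f_k\rangle)$; these are the same argument, since that identity is itself proved by exactly this finiteness, so no complaint there.

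However, there is a genuine gap in how you dispose of the dependence on $k$. The theorem asserts that $c$ depends only on $\tau$ and $n$, and your closing justification --- that $T(\tau)$ and $C(e)$ are defined inside the fixed ring $A$ --- is circular: $A=\C[\![x_1,\dots,x_{n+k}]\!]$ and the classes $T(\tau),C(e)\subset A^{k}$ both depend on $k$, so your argument as written produces a bound $c(n,k,\tau)$, one for each codimension, not a single constant. The paper removes this dependence by a step you omit entirely: it fixes $k=\tau$, invoking the fact that the Tjurina number is the dimension of the base of a mini-versal deformation, so that every icis of dimension $n$ with Tjurina number $\tau$ already occurs (up to isomorphism, with repetition) among the tuples of $T(\tau)\subset A^{\tau}$. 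Running the compactness argument in that single ambient space then yields one constant valid for every $k$. Without this reduction, or some substitute for it, your proof establishes the theorem only for each fixed $k$ separately, which is strictly weaker than the stated result.
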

\begin{proof}
Note That $D(1) = \bigcap_e C(e)$. 

For this proof we fix $k = \tau$. Since Tjurina number is the dimension of base of a mini-versal deformation \cite{LooijengaICIS}, we see that all possible icis with Tjurina number $\tau$ must occur within codimension $k = \tau$. As a consequence $T(\tau)$ is a collection of all possible icis which satisfy the hypothesis of the Theorem (albeit with repetition up to isomorphism). (Using this we have shown the bound does not depend on $k$)

Assume for a contradiction that the multiplicity of $C_{f}$ w.r.t. $\<f_1,...,f_\tau\>$ is not bounded on the set $T(\tau)$.
\begin{align} \nonumber
\begin{split}
& T(\tau) \cap C(e) \neq \emptyset \quad \forall e \text{ sufficiently larger than } \tau\\
\implies &T(\tau) \cap \bigcap_e C(e) \neq \emptyset \quad \text{ (by Lemma \ref{countable intersection lemma})}\\
\implies &T(\tau) \cap D(1) \neq \emptyset
\end{split}
\end{align}
This contradicts the fact that any $(f_1,...,f_{\tau}) \in T(\tau)$ can only define icis.
\end{proof}

\begin{thm}\label{milnor tjurina}
\milnorTjurinafst
abstractly
\milnorTjurinasnd
\end{thm}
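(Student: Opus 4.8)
The plan is to combine the two results established immediately above, so that the final theorem follows by direct composition. First I would apply Lemma~\ref{inequality lemma}: presenting the icis $X$ as $f^{-1}(0)$ for a regular sequence $f = (f_1,\dots,f_k)$, that lemma yields
\[\mu(X) \leq e(\O_{C_f}, \langle f_1,\dots,f_k\rangle),\]
so the Milnor number is dominated by the multiplicity of the critical locus with respect to the defining functions. This reduces the task of bounding $\mu(X)$ to bounding this one multiplicity, which is the quantity that the machinery of the previous section was built to control.

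Next I would invoke Theorem~\ref{tjurina critical}, which asserts that for an icis of dimension $n$ and Tjurina number $\tau$ there abstractly exists a constant $c > 0$, depending only on $n$ and $\tau$, with $e(\O_{C_f}, \langle f_1,\dots,f_k\rangle) \leq c$. Chaining this with the previous inequality gives $\mu(X) \leq c$, and since $c$ depends only on $n$ and $\tau$, this is exactly the assertion to be proved. The word ``abstractly'' is inherited unchanged: the bound is the same existential constant produced by Theorem~\ref{tjurina critical}, now transported to $\mu$ through Lemma~\ref{inequality lemma}.

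I expect no genuine obstacle at this final step, since all the substantive content lives in Theorem~\ref{tjurina critical}, where the existential bound on the critical multiplicity is extracted from the A$\m$AC formalism using the identity $D(1) = \bigcap_e C(e)$ together with the fact that fixing $k = \tau$ (via the $\tau$-dimensional base of a mini-versal deformation) removes any dependence on the codimension $k$. The only point worth checking is that the presentation $f$ underlying Lemma~\ref{inequality lemma} is compatible with the $k = \tau$ normalization used in Theorem~\ref{tjurina critical}; once that compatibility is noted, the theorem follows purely formally from the two displayed bounds.
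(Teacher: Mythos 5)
Your proposal is correct and is essentially identical to the paper's own proof: the paper likewise applies Lemma~\ref{inequality lemma} to bound $\mu(X)$ by the multiplicity of the critical locus with respect to $\langle f_1,\dots,f_k\rangle$ and then cites Theorem~\ref{tjurina critical} to bound that multiplicity by a constant depending only on $\tau$ and $n$. Your closing remark about compatibility with the $k=\tau$ normalization is a reasonable extra check, but it introduces no divergence from the paper's argument.
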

\begin{proof}
We use \ref{inequality lemma} to conclude $\mu \leq$ the multiplicity of critical locus w.r.t. $\<f_1,...,f_\tau\>$, so the result follows from \ref{tjurina critical} 
\end{proof}

\section{Proof of Examples}

Here we give proves of the examples stated and used in Theorem \ref{tjurina critical}

\begin{lemma}\label{Tjurina AmAC}
$T(r)$ (defined in \ref{some example}.1) is both A$\m$AC and A$\m$AO
\end{lemma}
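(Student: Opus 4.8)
The plan is to prove that membership in $T(r)$ is determined by the truncation $f\bmod\m^{r+2}$ (a finite-determinacy statement) and that, at this finite level, the defining condition carves out a Zariski-constructible set; since constructible sets are stable under complementation, this simultaneously yields that $T(r)$ is A$\m$AC and A$\m$AO.

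First I would make the Tjurina module explicit. Writing $I=\langle f_1,\dots,f_k\rangle$, the definition gives $T^1_X=\operatorname{coker}\!\big((A/I)^{n+k}\xrightarrow{Jac}(A/I)^k\big)=A^k/M$, where $M\subseteq A^k$ is generated by the $f_je_l$ (spanning $IA^k$) together with the Jacobian columns $\sum_i(\partial f_i/\partial x_m)e_i$. The support of $T^1_X$ inside $X$ is exactly $\S^n_X$, so $\len(T^1_X)<\infty$ iff $\dim\S^n_X=0$, i.e.\ iff $f$ defines an icis. Here the complete-intersection requirement is automatic: if $f_1,\dots,f_k$ failed to be a regular sequence then $\dim X>n$, a general point of $X$ would be a smooth point of dimension $>n$ and hence lie in $\S^n_X$, forcing $\dim\S^n_X\ge1$. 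Thus $f\in T(r)$ iff $\len(A^k/M)=r$.

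Next I would introduce the truncated invariants $\tau_i(f):=\len\big(A^k/(M+\m^iA^k)\big)=\dim_\C\!\big((A/\m^i)^k/\bar M^{(i)}\big)$. The submodule $M+\m^iA^k$ depends only on $f_j\bmod\m^i$ and $\partial f_i/\partial x_m\bmod\m^i$, hence only on $f\bmod\m^{i+1}$; and by Krull's intersection theorem $\tau_i(f)$ increases to $\len(A^k/M)=\tau$. The heart of the argument is the finite-determinacy step, which I would establish by Nakayama: if $\tau_i(f)=\tau_{i+1}(f)$ then $M+\m^iA^k=M+\m^{i+1}A^k$, so $\m^i(A^k/M)=\m\cdot\m^i(A^k/M)$ and therefore $\m^iA^k\subseteq M$, whence $\tau=\tau_i(f)<\infty$. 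Conversely, if $\tau=r$ then $\m^rT^1_X=0$ (a length-$r$ module over a local ring is annihilated by $\m^r$), so $\m^rA^k\subseteq M$ and $\tau_r(f)=\tau_{r+1}(f)=r$. This yields the clean description
\[
T(r)=\{\,f\in A^k:\ \tau_r(f)=r\ \text{and}\ \tau_{r+1}(f)=r\,\},
\]
so membership in $T(r)$ depends only on $f\bmod\m^{r+2}$.

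Finally I would assemble the A$\m$AC/A$\m$AO structure. On the finite-dimensional $\C$-space $(A/\m^{r+2})^k$ the generators of $\bar M^{(i)}$ are linear in the coefficients of $f$, so each locus $\{\tau_i(f)=r\}$ is locally closed (a rank condition on a coefficient-dependent matrix), making $Q:=\{f\bmod\m^{r+2}:\tau_r(f)=\tau_{r+1}(f)=r\}$ constructible. Taking $P_i$ to be the preimage of $Q$ for $i\ge r+2$ and its Chevalley image for $i<r+2$ exhibits $T(r)=\ula{\lim}P_i$, so $T(r)$ is A$\m$AC. Since $T(r)$ is exactly the preimage of $Q$ under the truncation $A^k\to(A/\m^{r+2})^k$, its complement is the preimage of the constructible set $(A/\m^{r+2})^k\setminus Q$; running the identical construction on this complement shows $A^k\setminus T(r)$ is A$\m$AC, i.e.\ $T(r)$ is A$\m$AO. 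The only real obstacle is the finite-determinacy step --- that equality of two consecutive truncated Tjurina numbers already forces stabilization of the whole sequence --- after which constructibility and complementation are purely formal.
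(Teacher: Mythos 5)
Your proposal is correct and takes essentially the same route as the paper: both arguments show that membership in $T(r)$ is detected by $f \bmod \m^{r+2}$ via a Nakayama-type finite-determinacy step, and then realize $T(r)$ as the preimage under truncation of a constructible subset of $(A/\m^{r+2})^k$, which yields A$\m$AC and A$\m$AO simultaneously since constructible sets are closed under complement. The differences are only in bookkeeping: the paper cuts out its constructible set by the single condition $\len(T^1_X/\m^{r+1}T^1_X)=r$ and cites semicontinuity of the Tjurina number for constructibility, whereas you use the equivalent stabilization condition $\tau_r(f)=\tau_{r+1}(f)=r$, prove constructibility directly as a rank condition on matrices linear in the coefficients of $f$, and make explicit a point the paper leaves implicit, namely that finite length of the cokernel already forces $f$ to define an icis.
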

\begin{proof}
We look at $B_{r+2} = (A/\m^{r+2})^{k}$ regarded as an affine space. Define $\pi: A^{k} \to B_{r+2}$ to be the canonical quotient map.

Using semi continuity of Tjurina number, $\-T := \{(\-f_1,...,\-f_{k})\in  B_{r+2} \ | \ \len(T^1_X/\m^{r+1}T^1_X) = r\}$ is a Zariski constructible subset.

We will prove that $T(r) = \pi^{-1}(\-T)$ 

First, let $(f_1,...,f_{k}) \in T(r)$. Since $\len(T^1_X) = r$, $\m^{r}T^1_X = 0$, so  $T^1_X = T^1_X/\m^{r+1}T^1_X$, so $\pi(f_1,...,f_{k})\in\-T$

Let $(f_1,...,f_{k}) \in \pi^{-1}(\-T)$. 
\begin{align} \nonumber
\begin{split}
&\len(\frac{T^1_X}{\m^{r+1}T^1_X}) = r\\ 
\implies & \m^{r}(\frac{T^1_X}{\m^{r+1}T^1_X}) = 0 \\
\implies & \frac{\m^{r}T^1_X}{\m^{r+1}T^1_X} = 0 \\
\implies & \m^{r}T^1_X = \m^{r+1}T^1_X \\
\implies & \m^{r}T^1_X = 0 \text{ (By Nakyama lemma)} \\
\implies & T^1_X = T^1_X/\m^{r+1}T^1_X\\
\end{split}
\end{align}
So we can conclude $\len(T^1_X) = r$, hence $(f_1,...,f_{k}) \in T(r)$.
\end{proof}

\begin{lemma}\label{dimension AmAC}
$D(d)$ (defined in \ref{some example}.3) is an A$\m$AC.
\end{lemma}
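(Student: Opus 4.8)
The plan is to realize the dimension condition cutting out $D(d)$ as a \emph{countable} intersection of length conditions, each of which is detectable on a finite jet of $f$. Write $I_f := \<f_1,\dots,f_k\> + J_k(f)$, so that $\S^n_X = \Spec(A/I_f)$ and $\dim(\S^n_X) = \dim(A/I_f)$. The cornerstone is the characterization
\[\dim(A/I_f) \geq d \iff \len\!\left(A/(I_f + \m^{t+1})\right) \geq \binom{t+d}{d} \quad \text{for all } t \geq 0.\]
For the forward direction I would apply Lemma \ref{dim bound lemma} to the local ring $R = A/I_f$ (whose residue field $\C$ is infinite): if $\delta := \dim R \geq d$, then with the maximal ideal $\bar{\m}$ of $R$ we get $\len(A/(I_f+\m^{t+1})) = \chi^{\bar{\m}}_R(t) \geq \binom{t+\delta}{\delta} \geq \binom{t+d}{d}$. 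For the converse, recall from Lemma \ref{dimension} that for $t \gg 0$ the function $\len(A/(I_f+\m^{t+1}))$ agrees with the Samuel polynomial of $R$, whose degree equals $\dim R$; since $\binom{t+d}{d}$ has degree $d$ in $t$ with positive leading coefficient, the stated inequality holding for all large $t$ forces $\dim R \geq d$. This reduces $D(d)$ to $\bigcap_{t \geq 0} D_t(d)$, where $D_t(d) := \{(f_1,\dots,f_k) \in A^k : \len(A/(I_f+\m^{t+1})) \geq \binom{t+d}{d}\}$.

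Next I would show each $D_t(d)$ is an A$\m$AC class pulled back from a finite level. The ideal $I_f \bmod \m^{t+1}$ is determined by finitely many Taylor coefficients of the $f_i$: the generators $f_i$ contribute their classes mod $\m^{t+1}$, while the entries $\partial f_i/\partial x_j$ of the Jacobian — and hence the $k\times k$ minors generating $J_k(f)$ — are determined mod $\m^{t+1}$ by $f \bmod \m^{t+2}$ (differentiation drops the order by one). Thus $I_f + \m^{t+1}$, viewed inside the finite-dimensional $\C$-vector space $A/\m^{t+1}$, is the span of a finite list of vectors whose coordinates are polynomial functions of the jet $\pi_{t+2}(f) \in (A/\m^{t+2})^k$, where $\pi_{t+2}\colon A^k \to (A/\m^{t+2})^k$ is the projection. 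Consequently $\dim_\C\big((I_f+\m^{t+1})/\m^{t+1}\big)$ is the rank of a matrix with polynomial entries in these jet coordinates, hence lower semicontinuous; equivalently $\len(A/(I_f+\m^{t+1})) = \dim_\C(A/\m^{t+1}) - \dim_\C\big((I_f+\m^{t+1})/\m^{t+1}\big)$ is upper semicontinuous on $(A/\m^{t+2})^k$. Therefore $Z_t := \{\len \geq \binom{t+d}{d}\}$ is a closed, in particular constructible, subset of $(A/\m^{t+2})^k$ with $D_t(d) = \pi_{t+2}^{-1}(Z_t)$. Setting $P_i$ to be the preimage of $Z_t$ under $(A/\m^i)^k \to (A/\m^{t+2})^k$ for $i \geq t+2$ (and $P_i = (A/\m^i)^k$ for $i < t+2$) exhibits each $P_i$ as constructible with $D_t(d) = \ula{\lim}\, P_i$, so $D_t(d)$ is A$\m$AC. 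Applying Lemma \ref{basic lemma}.3 to $D(d) = \bigcap_t D_t(d)$ then finishes the argument.

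The main obstacle is the finite-determinacy and semicontinuity bookkeeping of the second paragraph: one must check cleanly that forming $I_f + \m^{t+1}$ and then its codimension genuinely factors through a polynomial map on a fixed finite jet space, so that the rank function is lower semicontinuous and $Z_t$ is constructible. The Jacobian minors require the most care, since they are degree-$k$ polynomials in the first-order jets and thus shift the truncation needed from $\m^{t+1}$ to $\m^{t+2}$. Once this dependence is pinned down, the semicontinuity of matrix rank and the stability of constructibility under the affine projections $(A/\m^i)^k \to (A/\m^{t+2})^k$ make the conclusion routine.
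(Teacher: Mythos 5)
Your proposal is correct and takes essentially the same route as the paper: the same two-sided characterization of $\dim(\S^{n}_{X}) \geq d$ (Lemma \ref{dim bound lemma}(1) for the lower bound, Lemma \ref{dimension} for the converse) and the same jet-level constructibility with the shift to $\m^{t+2}$ to account for differentiation in the Jacobian. The only organizational difference is that you assemble $D(d)$ as a countable intersection of per-$t$ classes via Lemma \ref{basic lemma}.3, whereas the paper encodes all conditions $t \leq r-2$ directly into a single inverse system $D_r(d)$; these coincide, since that is exactly the system produced by Lemma \ref{basic lemma}.3.
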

\begin{proof}
We will prove that $D(d) = \ula{\lim} D_i(d)$, where $D_i(d)$ is defined as follows.

$$D_r(d) := \{ (\-f_1,...,\-f_{k}) \in (A/\m^{r})^{k} \ |\ \len(\O_{\S^{n}_{X}}/\m^{t+1}) \geq {t+d \choose d}, \ \forall \ t \leq r-2\}$$
One sees that, $D_r(d)$ is a Zariski constructible subset. (One argument can be seen in \cite{2022ParamMohit}[lemma 2.9].)

$D(d) \subset \ula{\lim} D_i(d)$ because if $\dim(\S^{n}_{X}) \geq r$ then $(\-f_1,...,\-f_{k}) = (f_1 + \m^r ,..., f_{k} + \m^r)$ satisfies the condition of $D_r(d)$ by lemma \ref{dim bound lemma}(1).

Let $(f_1,...,f_{k}) \in A$ and $(f_1,...,f_{k}) \not\in D(d)$, then $\len(\O_{\S^{n}_{X}}/\m^{t+1})$ is a polynomial of degree $< d$ (lemma \ref{dimension}). So for large $t$, $\len(\O_{\S^{n}_{X}}/\m^{t+1})<{t+d \choose d}$ which is a higher degree polynomial. So $(f_1,...,f_{k}) \not\in \ula{\lim} D_i(d)$
\end{proof}

\begin{lemma}\label{critical AmAC}
$C(e)$ (defined in \ref{some example}.2) is an A$\m$AC
\end{lemma}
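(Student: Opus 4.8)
The plan is to replace the asymptotic invariant $e(\mathcal{O}_{C_f},\langle f_1,\dots,f_k\rangle)$ by a single finite colength, using the structure already extracted in Lemma~\ref{inequality lemma}. There we saw that for an icis $(f_1,\dots,f_k)$ the ring $\mathcal{O}_{C_f}$ is Cohen--Macaulay of dimension $k-1$ and free over $\mathbb{C}[\![g_1,\dots,g_{k-1}]\!]$ for general linear combinations $g_i=\sum_j\lambda_{ij}f_j$, so that $e(\mathcal{O}_{C_f},\langle f_1,\dots,f_k\rangle)=\len\big(A/(J_k(f)+\langle g_1,\dots,g_{k-1}\rangle)\big)$, and this common generic value is the \emph{minimum} of the colength over the coefficient space $\lambda\in\mathbb{C}^{(k-1)\times k}$. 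Writing $L^{\lambda}(f):=\len\big(A/(J_k(f)+\langle g^{\lambda}\rangle)\big)\in\mathbb{Z}_{\ge0}\cup\{\infty\}$, the first step is to prove the clean reformulation $(f_1,\dots,f_k)\in C(e)\iff L^{\lambda}(f)\ge e$ for \emph{every} $\lambda$. Indeed, in the icis case the minimum over $\lambda$ is exactly $e(\mathcal{O}_{C_f},\langle f\rangle)$ while for every non-minimal parameter reduction the Cohen--Macaulay colength is $\ge e$; and in the non-icis case $\langle g^{\lambda}\rangle\subseteq\langle f\rangle$ can never be $\m$-primary in $\mathcal{O}_{C_f}$, so $L^{\lambda}\equiv\infty$, which is precisely how the clause ``$\dim\mathcal{S}^{n}_{X}\ge1$'' forces such points into $C(e)$.

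Next I would build the finite-level description. Set $L^{\lambda}_r(f):=\len\big((A/\m^{r})/(J_k(\bar f)+\langle \bar g^{\lambda}\rangle)\big)$; these truncated colengths are non-decreasing in $r$ with $\sup_r L^{\lambda}_r=L^{\lambda}$, and for fixed $r$ the value is a constructible function of the pair $(\bar f,\lambda)$ by the length argument of \cite{2022ParamMohit}[lemma 2.9]. Projecting away $\lambda$ by Chevalley's theorem, each stratum $\{\bar f\in(A/\m^r)^k:\ \exists\,\lambda,\ L^{\lambda}_r(\bar f)\le e-1\}$ is constructible, and I would take $C_r(e)$ to be its complement. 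The goal is then the two inclusions making $C(e)=\varprojlim C_r(e)$. The inclusion $C(e)\subseteq\varprojlim C_r(e)$ is the easy direction: if $L^{\lambda}(f)\ge e$ for all $\lambda$ then at \emph{every} level no $\lambda$ can already exhibit a stabilized colength $\le e-1$, so $\bar f^{(r)}\in C_r(e)$ for all $r$. Non-icis points sit inside $D(1)$, which is A$\m$AC by Lemma~\ref{dimension AmAC}, and closure under finite union (Lemma~\ref{basic lemma}) lets me glue the two pieces.

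The genuinely nonformal step is the reverse inclusion, where the quantifier order must be exchanged. Membership in $\varprojlim C_r(e)$ only says that for each level $r$ \emph{some} reduction behaves well, whereas I need one reduction good at all levels, i.e.\ a single $\lambda$ with $L^{\lambda}(f)=\sup_rL^{\lambda}_r(f)\le e-1$. Here I would exploit that the ground field $\mathbb{C}$ is uncountable. Fixing $f$, upper semicontinuity of length in $\lambda$ makes each $U_r:=\{\lambda:\ L^{\lambda}_r(f)\le e-1\}$ Zariski-open in $\mathbb{C}^{(k-1)\times k}$; since $L^{\lambda}_r$ is non-decreasing in $r$ these are nested and, by hypothesis, nonempty. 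A nested sequence of nonempty Zariski-open subsets of affine space over an uncountable field has nonempty intersection, because its complement is a countable union of proper closed sets and cannot exhaust $\mathbb{C}^{(k-1)\times k}$. Any $\lambda^{*}\in\bigcap_rU_r$ gives a \emph{finite} colength $L^{\lambda^{*}}(f)\le e-1$; finiteness forces $\langle g^{\lambda^{*}}\rangle$ to cut $C_f$ down to dimension $0$, so $(f_1,\dots,f_k)$ is an icis with $e(\mathcal{O}_{C_f},\langle f\rangle)\le L^{\lambda^{*}}(f)\le e-1$, contradicting $f\in\varprojlim C_r(e)$. This pins down $C(e)=\varprojlim C_r(e)$.

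The main obstacle I anticipate is exactly the truncation mismatch that forces this detour. Unlike Lemma~\ref{dimension AmAC}, where $\dim\mathcal{S}^{n}_{X}\ge d$ yields the all-$t$ bound $\len(\mathcal{O}_{\mathcal{S}^{n}_{X}}/\m^{t+1})\ge\binom{t+d}{d}$ of Lemma~\ref{dim bound lemma} which survives the \emph{exact} truncation at $\m^{r}$, the multiplicity $e(\mathcal{O}_{C_f},\langle f\rangle)$ is only the leading coefficient of the $\langle f\rangle$-adic Samuel polynomial, and $\langle f\rangle^{t+1}$ contains no power of $\m$ uniform in the jet of $f$; consequently no single Samuel-type inequality computed inside $A/\m^{r}$ can certify $e\ge e_0$, and even for an icis point with $e=e_0$ the truncated colengths fall short of $e_0$ at small $r$. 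Collapsing the multiplicity to one finite colength via a general reduction trades these asymptotics for constructible strata, but at the cost of the universal quantifier over $\lambda$; reconciling that quantifier with the inverse limit is what the uncountability of $\mathbb{C}$ buys, and the remaining work—verifying constructibility and the semicontinuity of $L^{\lambda}_r$ uniformly in $\lambda$—is routine bookkeeping along the lines of \cite{2022ParamMohit}.
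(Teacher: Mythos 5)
Your reformulation of $C(e)$ as the single condition ``$L^{\lambda}(f)\geq e$ for every $\lambda$'' is correct (in the icis case every reduction $\langle g^{\lambda}\rangle\subseteq\langle f_1,\dots,f_k\rangle$ gives colength $\geq$ multiplicity, with equality for generic $\lambda$; in the non-icis case every $L^{\lambda}=\infty$), and it is a genuinely different route from the paper, which never passes to linear reductions but instead truncates the Samuel-function bound of Lemma \ref{dim bound lemma}(3) for the ideal $\langle f_1,\dots,f_k\rangle$ itself. The gap is in the inverse-limit step, and it is twofold. First, with your definition $C_r(e)=\{\bar f:\forall\lambda,\ L^{\lambda}_r(\bar f)\geq e\}$, the inclusion you call easy, $C(e)\subseteq\varprojlim C_r(e)$, is false: $L^{\lambda}(f)\geq e$ does not imply $L^{\lambda}_r(f)\geq e$ for small $r$. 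Indeed, for any $f$ singular at the origin, $J_k(f)$ and $\langle g^{\lambda}\rangle$ lie in $\m$, so $L^{\lambda}_1(f)=\len(A/\m)=1<e$ (for $e\geq 2$); hence $C_1(e)$ contains no singular jet, $\varprojlim C_r(e)$ misses essentially all of $C(e)$, and the claimed equality collapses. Your word ``stabilized'' points at exactly the missing ingredient, but no stabilization condition appears in your definition of $C_r(e)$. Second, your ``hard direction'' rests on a misreading of the quantifiers: membership in $\varprojlim C_r(e)$ says that for every $r$ and \emph{every} $\lambda$ one has $L^{\lambda}_r(f)\geq e$ --- not that for each $r$ \emph{some} reduction behaves well --- so the inclusion $\varprojlim C_r(e)\subseteq C(e)$ is immediate from $L^{\lambda}_r\leq L^{\lambda}$ and needs no uncountability. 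What your nested-open-sets argument actually proves is that the \emph{complement} of $C(e)$ equals $\varprojlim_r\{\bar f:\exists\lambda,\ L^{\lambda}_r(\bar f)\leq e-1\}$, i.e.\ that $C(e)$ is A$\m$AO. That is a different statement from the lemma, and it is not what Theorem \ref{tjurina critical} consumes: there one intersects countably many classes $C(e)$ with $T(\tau)$ via Lemma \ref{countable intersection lemma}, which requires them to be A$\m$AC.

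The repair is the Nakayama stabilization you hinted at, and it makes everything formal: impose your condition only for levels $r>e$ (take $C_r(e)$ to be all of $(A/\m^r)^k$ for $r\leq e$). If $r\geq e$ and $L^{\lambda}_r(f)=\len\bigl(A/(\m^r+I_\lambda)\bigr)\leq e-1$, where $I_\lambda=J_k(f)+\langle g^{\lambda}\rangle$, then in the chain $\m^j+I_\lambda$, $0\leq j\leq r$, the $r$ successive quotients have total length $\leq e-1<r$, so some step is an equality; Nakayama then gives $\m^j\subseteq I_\lambda$, whence $L^{\lambda}(f)=L^{\lambda}_r(f)\leq e-1$. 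So for $r>e$ the truncated condition is \emph{equivalent} to ``$L^{\lambda}(f)\geq e$ for all $\lambda$'', both inclusions of $C(e)=\varprojlim C_r(e)$ become one-line checks, and the uncountability of $\C$ is never needed; constructibility of each $C_r(e)$ via Chevalley survives this change verbatim. Note that this cutoff is in spirit the same device the paper uses when it caps its defining ideal with $\m^{e\binom{t+k-1}{k-1}}$ inside its $C_r$: an explicit $\m$-power threshold tied to $e$ is what allows a finite jet to certify an assertion about the full colength or multiplicity.
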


\begin{proof}
\begin{align} \nonumber
\begin{split}
C_r := \{(\-f_1,...,\-f_{k}) \in  (A/\m^{r})^{k} \ &|\ \len(\frac{\O_{C_{F}}}{\m^{e{t + k - 1 \choose k - 1}} + \<f_1,...,f_{k}\>^{t+1}}) \geq \sum_{i=0}^{d} (-1)^{i} {e \choose i+1}{t +k-1-i\choose k-1-i},\\ &\forall \ t \text{ \st } e-1 \leq t \text{ and } e{t + k - 1 \choose k - 1} \leq r-2\}
\end{split}
\end{align}
One sees that, $C_r$ is a Zariski constructible subset. (Similar to the previous Lemma)

We will prove that $C(e) = \ula{\lim} C_i \cup D(1)$.

$C(e) \subset \ula{\lim} C_i \cup D(1)$ because if $(f_1,...,f_{k}) \in C(e)$ and $(f_1,...,f_{k}) \not\in D(1)$ then  $(f_1,...,f_{k})$ is an icis, and so $(\-f_1,...,\-f_{k}) = (f_1,...,f_{k})+\m^{r}$ satisfies the condition of $C_i$ by lemma \ref{dim bound lemma}(3).

First Note that from definition of $C(e)$ is $D(1) \subset C(e)$

Let $(f_1,...,f_{k}) \in A^{k}$, and $(f_1,...,f_{k}) \in \ula{\lim} C(e) - D(1)$ then $(f_1,...,f_{k})$ defines an icis. Then $\len(\frac{C_{f}}{\<f_1,...,f_{k}\>^{t+1}})$ makes sense and is a polynomial of degree $> d$ (in this case it is not an icis and is in $D(1)$) or if degree $= d$ the highest coefficient $\geq e/d!$ (lemma \ref{dimension} and definition of multiplicity). So Critical locus $C_{f}$ has multiplicity $\geq e$. So $(f_1,...,f_{k}) \in C(e)$
\end{proof}

\renewcommand{\abstractname}{Acknowledgements}
\begin{abstract}
This work was supported by the Department of Atomic Energy, Government of India [project no. 12 - R\&D - TFR - 5.01 - 0500].
\end{abstract}
\renewcommand{\abstractname}{Abstract}

\bibliographystyle{alpha}
\bibliography{references}

\begin{thebibliography}{ACABMH20}

\bibitem[ACABMH20]{2020minimal}
Maria Alberich-Carraminana, Patricio Almiron, Guillem Blanco, and Alejandro
  Melle-Hernandez.
\newblock The minimal tjurina number of irreducible germs of plane curve
  singularities, 2020.

\bibitem[Alm21]{2021Almiron}
Patricio Almiron.
\newblock On the quotient of milnor and tjurina numbers for two-dimensional
  isolated hypersurface singularities, 2021.

\bibitem[DG18]{DimcaGreuel}
Alexandru Dimca and Gert-Martin Greuel.
\newblock On 1-forms on isolated complete intersection curve singularities.
\newblock {\em J. Singul.}, 18:114--118, 2018.

\bibitem[HG19]{2019saitos}
Marcelo~E. Hernandes and Yohann Genzmer.
\newblock On the saito's basis and the {T}jurina number for plane branches,
  2019.

\bibitem[Liu18]{liu2018milnor}
Yongqiang Liu.
\newblock {M}ilnor and {T}jurina numbers for a hypersurface germ with isolated
  singularity, 2018.

\bibitem[Loo84]{LooijengaICIS}
E.~J.~N. Looijenga.
\newblock {\em Isolated singular points on complete intersections}, volume~77
  of {\em London Mathematical Society Lecture Note Series}.
\newblock Cambridge University Press, Cambridge, 1984.

\bibitem[Mat89]{MatsumuraCRT}
Hideyuki Matsumura.
\newblock {\em Commutative ring theory}, volume~8 of {\em Cambridge Studies in
  Advanced Mathematics}.
\newblock Cambridge University Press, Cambridge, second edition, 1989.
\newblock Translated from the Japanese by M. Reid.

\bibitem[PU22]{2022ParamMohit}
A.~J. Parameswaran and Mohit Upmanyu.
\newblock Generalization of gurjar's hyperplane section theorem to arbitrary
  analytic varieties and a$\mathbb{m}$ac classes, 2022.

\end{thebibliography}

\end{document}